\newcommand{\executeiffilenewer}[3]{%
\ifnum\pdfstrcmp{\pdffilemoddate{#1}}%
{\pdffilemoddate{#2}}>0%
{\immediate\write18{#3}}\fi%
}
\newcommand{%
\executeiffilenewer{.svg}{.pdf}%
{inkscape -z -D --file=.svg %
--export-pdf=.pdf --export-latex}%
\input{.pdf_tex}%
}[1]{%
\executeiffilenewer{#1.svg}{#1.pdf}%
{inkscape -z -D --file=#1.svg %
--export-pdf=#1.pdf --export-latex}%
\input{#1.pdf_tex}%
}
\theoremstyle{plain}
\newtheorem{theo}{Theorem}[section]
\newtheorem{proposition}[theo]{Proposition}
\newtheorem{lemma}[theo]{Lemma}
\newtheorem{defi}[theo]{Definition}
\newtheorem{corollary}[theo]{Corollary}
\newfont{\nset}{msbm10}
\newcommand{\ns}[1]{\mbox{\nset #1}}
\def\Z{\ns Z}
\def\Z{\ns{Z}}
\def\dist{\mathop{\rm dist}\nolimits}
\def\ecc{\mathop{\rm ecc}\nolimits}
\def\swap{\mathop{\rm swap}\nolimits}
\def\>{\mathop{\rightarrow}\nolimits}
\def\n{\mbox{\bf n}}
\def\sn{\mathbf{n}}
\def\r{\mbox{\bf r}}
\def\u{{\mbox {\bf u}}}
\def\v{\mbox{\bf v}}
\def\w{{\mbox {\bf w}}}
\def\x{\mbox{\bf x}}
\def\y{\mbox{\bf y}}
\def\0{\mbox{\bf 0}}
\def\vecalpha{\mbox{\boldmath $\alpha$}}
\def\alt{\mathop{\rm alt}\nolimits}
\title{A new general family of deterministic\\ hierarchical networks
\thanks{Corresponding author: M.A. Fiol, Dept. Matem\`{a}tica Aplicada IV,
Universitat Polit\`{e}cnica de Catalunya, Jordi Girona 1-3,
08034 Barcelona (Catalonia), Tel: +34 93
4015993, Fax: +34 93 4015981, e-mail: {\tt fiol@ma4.upc.edu}} }
\author{C. Dalf\'o$^a$, M.A. Fiol$^{a,b}$\\
{\small $^a$Departament de Matem\`atica Aplicada IV }\\
{\small Universitat Polit\`ecnica de Catalunya }\\
{\small Barcelona (Catalonia) }\\
{\small $^{b}$Barcelona Graduate School of Mathematics} \\
{\small E-mails: {\tt \{cdalfo,fiol\}@ma4.upc.edu}}}
\begin{document}
\maketitle

\begin{abstract}
It is known that many networks modeling real-life complex systems are small-word (large local clustering and small diameter) and scale-free (power law of the degree distribution), and very often they are also hierarchical.
Although most of the models are based on stochastic methods, some deterministic constructions have been recently proposed, because this allows a better computation of their properties.
Here a new deterministic family of hierarchical networks is presented, which generalizes most of the previous proposals, such as the so-called binomial tree. The obtained graphs can be seen as graphs
on alphabets (where vertices are labeled with words of a given alphabet, and the edges are defined by a specific rule relating different words). This allows us the characterization of their main distance-related parameters, such as the radius and the diameter. Moreover, as a by product, an efficient shortest-path local algorithm is proposed.
\end{abstract}

%

\section{Introduction}
\subsection{Models for complex networks}
Since the papers of Watts and
Strogatz~\cite{ws98} on small-world networks and by Barab\'asi and
Albert~\cite{ba99} on scale-free networks, there has been a
special interest in the theoretical study of complex networks, such as the World Wide Web~\cite{ajb99}, some kind of social networks \cite{rb03,n03,rsmob02}, communication networks \cite{tp12}, protein networks \cite{JeMaBaOl01,wrb03}, etc.
Two main characteristic of such networks are the presence of a strong
local clustering (that is, groups of nodes with mutual interconnections), high modularity, certain distribution of the degrees,
and self-similarity.

In this paper we propose a new deterministic family, which generalizes some previous proposals of hierarchical networks \cite{rsmob02,rb03,n03,bccf09,bcdf14}. As a first approach, our family of hierarchical networks is defined
recursively from an initial complete graph on $n_k$ vertices.  Also, it is shown that the obtained graphs can also be seen as graphs on alphabets \cite{gfy92}. The vertices of such graphs are labeled with words of a given alphabet, and their edges
are defined by some specific rules relating different words. This allows us the
characterization of their main distance-related parameters, such as the vertex eccentricities, the radius
and the diameter. Moreover, as a by product, an efficient shortest-path local
algorithm is proposed.

\subsection{Some basic notation}
Here we present the basic notation used throughout the paper.
Let $G=(V,E)$ be a finite, simple, and connected graph
with vertex set $V$, order $N=|V|$, edge set $E$, and size $M=|E|$.
If the vertices $u$ and $v$ are adjacent, $uv\in E$, we represent it by $u \sim v$. The {\em distance} between vertices $u$ and $v$ is denoted by $\dist(u,v)$.
The {\em eccentricity} of a vertex $u$ is $\varepsilon (u)= \min\{\dist(u,v): v\in V \}$. Hence, the {\em radius} and {\em diameter} of $G$ are, respectively,
$R=\min_{u\in V}\ecc(u)$, and $D=\max_{u\in V}\ecc(u)$. The set
of vertices at distance $i$ from a given vertex $u\in V$ is $G_i(u)=\{v:\dist(u,v)=i\}$, for $i=0,\ldots,D$. Then, the {\em degree} of a vertex $u$ is just $\delta(u)=|G_1(u)|$.

\section{The hierarchical graph  $H_{\sn}^k$}
In this section we generalize the constructions of deterministic
hierarchical graphs introduced by Ravasz and Barab\'asi~\cite{rb03}, Ravasz, Somera, Mongru, Oltvai, and Barab\'asi~\cite{rsmob02}, Noh~\cite{n03}, and
Barri\`ere, Comellas, Dalf\'o and Fiol~\cite{bcdf14}. Roughly
speaking, our graphs are constructed in the following way: Given $k$ integers $n_1,n_2,\ldots,n_k$, we first  connect a
selected root vertex of the complete graph $K_{n_1}$ to some vertices of
$n_{2}-1$ copies of $K_{n_1}$, and then we add some new edges between
such copies. This gives a graph with $n_{1}n_2$ vertices. Next,
$n_{3}-1$ replicas of the new whole structure are added, again with some
edges between them and to the same root vertex. At this step the
graph has $n_{1}n_{2}n_3$ vertices. Then we iterate the process until the
desired graph, with order $N=n_1n_2\cdots n_k$, 
is obtained.
We give below two formal definitions.


\subsection{A recursive definition}
A recursive definition of the considered networks is as follows.

\begin{figure}[t]
\begin{center}
\vskip-0.75cm
\includegraphics[width=12cm]{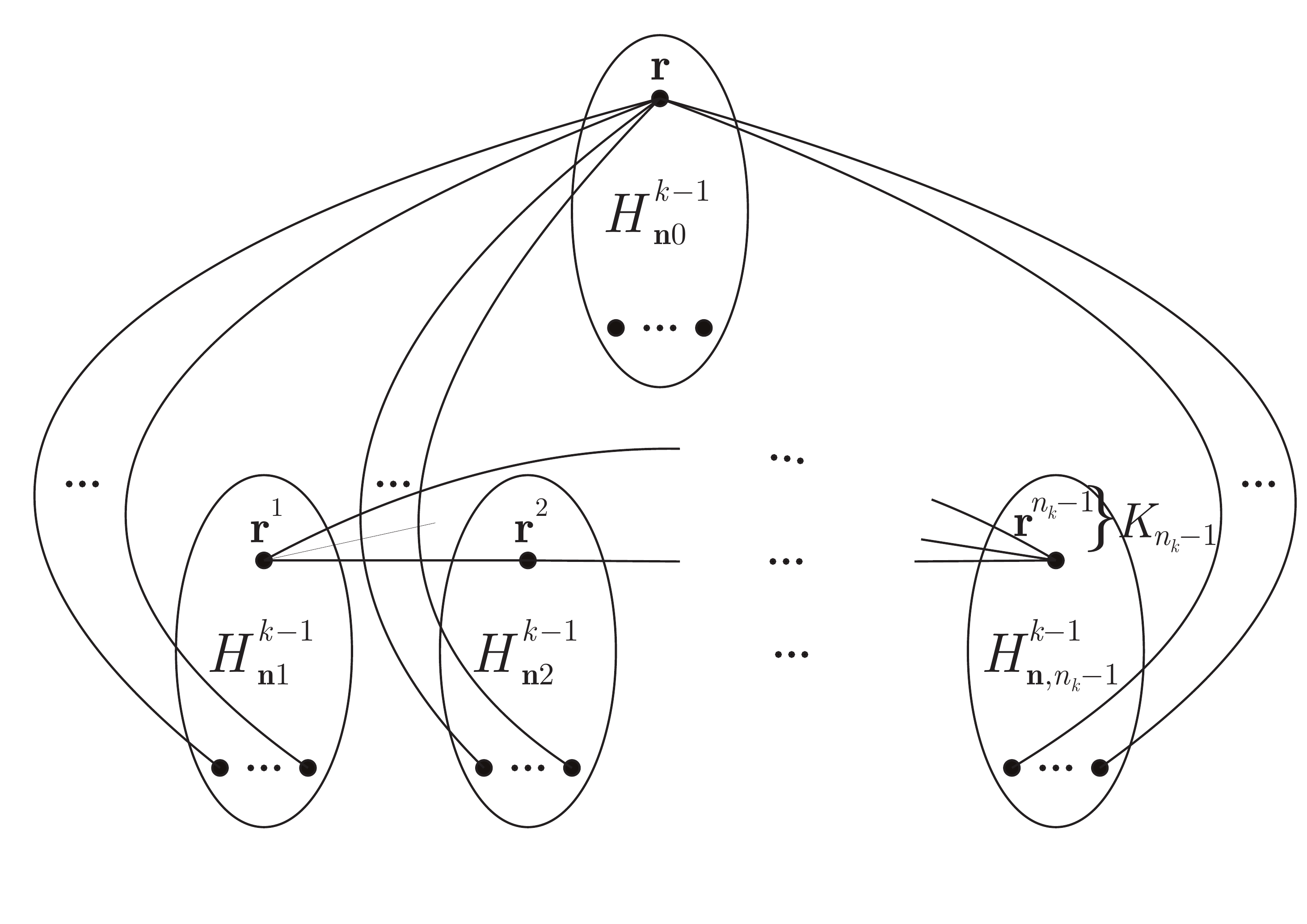}
\vskip-1cm
\caption{The graph $H_{\sn}^k$ from $n_k$ copies of $H_{\sn}^{k-1}$. }
\label{H(n,k)}
\end{center}
\end{figure}

\begin{defi}
\label{recur-defi}
Let $n_1,n_2,\ldots$  a sequence of positive integers, $n_i\geq 2$, whose finite $k$-subsequences $n_1,\ldots,n_k$ are abbreviated by the symbols $_{\sn}^k$, $k=1,2,\ldots$.   The {\em hierarchical graph}
$H_{n_1,\ldots,n_k}$, also denoted by $H_{\sn}^k$, has vertex set $V_{\sn}^k$, with $N=n_1n_2\cdots n_k$ vertices, each
denoted by a $k$-string $x_1 x_2 x_3\ldots x_{k}$, where $x_i \in
\Z_{n_i}$ for $i=1,\ldots,k$, and edge set $E_{\sn}^k$ defined recursively
as follows:
\begin{itemize}
\item
$H_{\sn}^1\equiv H_{n_1}$ is the complete graph $K_{n_1}$.
\item
For  $k>1$,  $H_{\sn}^k$ is obtained from the union of $n_k$ copies of
$H_{\sn}^{k-1}$, each denoted by $H_{\sn\alpha}^{k-1}$,
and with vertices $x_1 x_2\ldots x_{k_1}\alpha$, for every fixed $\alpha=0,\ldots, n_k-1$, by adding the following new edges:
\begin{eqnarray}
\label{adj2'}
00\ldots 00  & \ \sim \ &  x_1 x_2 \ldots x_{k-1}
x_{k}, \qquad    \mbox{$x_i\neq 0$, for $i=1,\ldots,k$}; \\
\label{adj3'}
00\ldots 0 x_{1} & \ \sim  \ &  00\ldots 0y_{1},
\hskip 1.9cm \mbox{$x_{1},y_{1}\neq 0$, and $\ x_{1} \neq y_{1}$.}
\end{eqnarray}
\end{itemize}
\end{defi}

To illustrate the recursive procedure, Fig.~\ref{H(n,k)} shows the
hierarchical graph $H_{\sn}^k$ obtained by joining $n_k$ copies of $H_{\sn}^{k-1}$.
Notice that
vertex $\r:=00\ldots 0$, which we distinguish and call the \emph{root},
is adjacent by (\ref{adj2'}) to vertices $x_1 x_2\ldots x_{k}$, for
$x_i\neq 0$ and $i=1,\ldots,k$, which we call
\emph{peripheral vertices}.

In particular, if all the numbers $n_i$, $i=1,\ldots,k$, are equal to, say $n$, we will denote the corresponding graph as $H_{n^k}$.

\subsection{A definition as a graph on alphabet}
To give a more direct definition of $H_{\sn}^k$, it is convenient to introduce the following notation.
The length $\ell$ (number of elements) of a given string $\x=x_1\ldots x_{\ell}$ will be denoted by $|\x|$.  If all its elements are nonzero, we represent it by $\x^*=x_1^*\ldots x_{\ell}^*$. A string
with all its elements $0$ is denoted by $\0$.
Then, the edge set $E_{\sn}^k$ is characterized by the following adjacency rules (the substrings of each vertex have appropriate lengths and, hence, they sum up to $k$):
\begin{eqnarray}
x_1\x& \ \sim \ &  y_1\x,
\qquad \mbox{where $x_1\neq y_1$} \label{adj0}; \\
\0\x      & \ \sim \ &  \y^*\x,
\qquad \mbox{where $|\y^*|=|\0|$} \label{adj1}; \\
\0x_i^*\x  & \ \sim  \ &  \0y_i^*\x ,
\qquad \mbox{where $y_i^*\neq x_i^*$.} \label{adj2}
\end{eqnarray}

As a concrete example, Fig.~\ref{H_{2,3,4}} shows two different drawings of the graph $H_{2,3,4}$.
\begin{figure}[t]
\begin{center}
\vskip-0.75cm
\includegraphics[width=14cm]{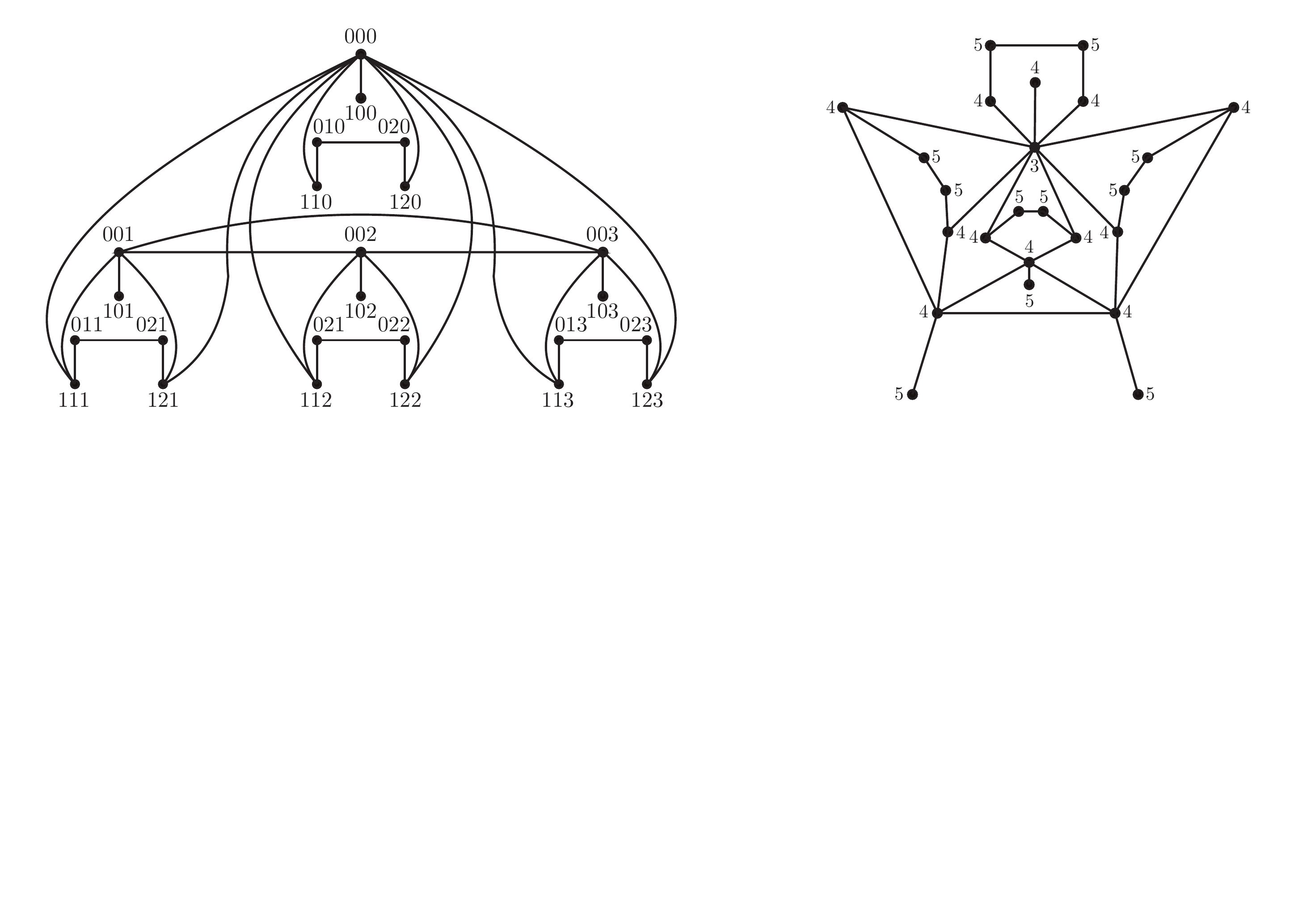}
\vskip-5.5cm
\caption{Two representations of the graph $H_{2,3,4}$: On the left a non-planar drawing with the labels of its vertices, and on the right a planar drawing with the eccentricities of its vertices.}
\label{H_{2,3,4}}
\end{center}
\end{figure}

\subsection{Some particular cases}

\subsubsection{The hierarchical network $H_{n,k}$}

The hierarchical network introduced by Barri\`ere, Comellas, and the authors
(see \cite{bcdf14}), which was denoted by $H_{n,k}$ is just $H_{n^k}$ (that is, $n_1=\cdots=n_k=n$).
In turn, $H_{n^k}$ generalizes the deterministic hierarchical network
introduced by Ravasz, Somera, Mongru, Oltvai, and Barab\'asi~\cite{rsmob02}, see also Barab\'asi and Oltvai,~\cite{bo04} (which corresponds to $H_{4^k}$). Moreover, the deterministic hierarchical networks introduced by Ravasz and Barab\'asi~\cite{rb03} and generalized
by Noh~\cite{n03}, constitute  a subgraph of $H_{5^k}$ (some edges are not present).

\subsubsection{The binomial tree $B_k$}
As it is well-known, a binomial tree is used in Computer Science to  model a recursive data structure. A tree of degree zero $B_0$ is a singleton, in our context, allowing sequences of length zero, we could denote it as  $B_0=H_{2^0}$. A tree of degree $k$, $B_k$, is constructed from two trees of degree $k-1$, by joining their two roots. Then, within our notation, $B_k=H_{2^k}$.
Figure \ref{binomial trees} shows the binomial trees of order $k=0,\ldots,4$.
Notice that $B_k$ can also recursively defined by saying that its root node has as children the roots of binomial trees of orders $k-1,k-2,...,2,1,0$.
\begin{figure}[!h]
\begin{center}
\includegraphics[width=14cm]{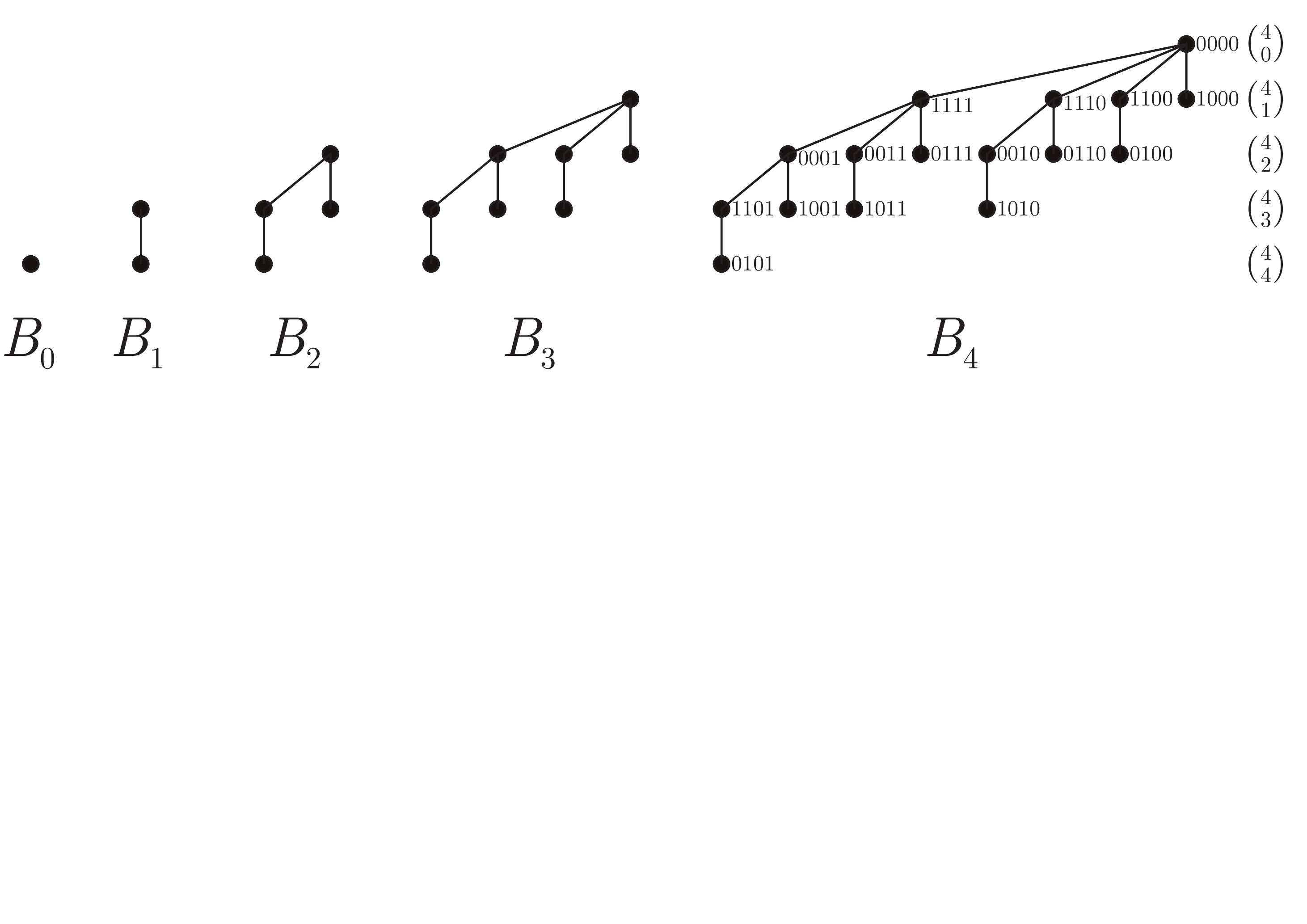}
\vskip-6cm
\caption{The first binomial trees $B_i$, $i=0,\ldots,4$.}
\label{binomial trees}
\end{center}
\end{figure}

The binomial tree  can also be seen as the hierarchical product  of $k$ copies of the complete grapk on 2 vertices $K_2$,  denoted
$B_k=K_2\sqcap\stackrel{k}{\cdots}\sqcap K_2$ (see Barri\`ere, Comellas Dalf\'o, and Fiol \cite{bcdf09}).

In due course, we will show that the number of vertices $\x$ of $H_{2^k}$ at distance $i$ from the root $\r=\0$  is the binomial coefficient ${k\choose i}$ (see Fig.~\ref{binomial trees}). Of course, this property is the reason for the name of such a structure.

\section{Hierarchical properties}
\label{hierarquicalprop}

The main structural  properties of the graphs $H_{\sn}^k$ are the following:

\subsection{Order and size}
We have already seen that the order of $H_{\sn}^k$
is $N=|V_{\sn}^k|=\prod_{i=1}^k n_i$. With respect to its size, we have the following result:
\begin{lemma}
Let $\n\equiv n_1,n_2,\ldots, n_k$. Then, the number of edges of $H_{\sn}^k$ is
\begin{equation}\label{recur-size}
M=|E_{\sn}^k|={n_1\choose 2}\prod_{i=2}^k n_i
+\sum_{i=2}^k\prod_{j=1}^i (n_j-1)\prod_{j=i+1}^k n_j
+\sum_{i=2}^k {n_i-1\choose 2}\prod_{j=i+1}^k n_j.
\end{equation}
\end{lemma}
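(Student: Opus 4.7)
The plan is to prove the formula by setting up a first-order linear recurrence on $k$ from the recursive definition of $H_{\sn}^k$, and then unroll it. Let $M_k = |E_{\sn}^k|$.

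\textbf{Step 1: Set up the recurrence.} From Definition~\ref{recur-defi}, the graph $H_{\sn}^k$ is obtained from $n_k$ disjoint copies of $H_{\sn}^{k-1}$ (which contribute $n_k M_{k-1}$ edges in total) by adding the edges described by (\ref{adj2'}) and (\ref{adj3'}). I would count these two new families separately:
\begin{itemize}
\item The edges (\ref{adj2'}) join the root $\r = 00\ldots 0$ to each \emph{peripheral} vertex $x_1 x_2\ldots x_k$ with $x_i \neq 0$ for every $i$. Since each coordinate has $n_i - 1$ non-zero choices, this family contributes $\prod_{i=1}^{k}(n_i - 1)$ edges.
\item The edges (\ref{adj3'}) form a complete subgraph on the $n_k - 1$ ``copy roots'' $00\ldots 0\alpha$ with $\alpha \in \{1,\ldots,n_k-1\}$, contributing $\binom{n_k-1}{2}$ edges.
\end{itemize}
No edge is counted twice because the recursive edges live entirely inside a single copy (the last coordinate is the same at both endpoints), while both new families connect vertices that differ in the last coordinate. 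Hence
\begin{equation*}
M_k \;=\; n_k\, M_{k-1} \;+\; \prod_{i=1}^{k}(n_i - 1) \;+\; \binom{n_k-1}{2},
\qquad M_1 = \binom{n_1}{2}.
\end{equation*}

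\textbf{Step 2: Unroll.} I would prove by induction on $k$ (or simply by telescoping) that iterating the recurrence yields
\begin{equation*}
M_k \;=\; \Bigl(\prod_{j=2}^{k} n_j\Bigr)\, M_1
\;+\; \sum_{i=2}^{k} \Bigl[\prod_{j=1}^{i}(n_j - 1) + \binom{n_i-1}{2}\Bigr] \prod_{j=i+1}^{k} n_j,
\end{equation*}
with the standard convention that an empty product equals $1$. Substituting $M_1 = \binom{n_1}{2}$ and splitting the two bracketed terms gives exactly (\ref{recur-size}).

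\textbf{Main obstacle.} The argument is essentially bookkeeping; the only subtlety is to convince the reader that the three contributions in Step~1 are disjoint and exhaustive. I would therefore stress the observation that every edge added at stage $k$ has endpoints whose last coordinates are distinct (so it cannot coincide with a recursive edge), that (\ref{adj2'}) has the global root as one endpoint while (\ref{adj3'}) does not, and that no edge of the recursive copies has all coordinates except possibly the last equal to zero (so neither new family overlaps with the recursive edges either). Once this disjointness is clear, the recurrence and its unrolling are routine.
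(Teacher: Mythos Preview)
Your proof is correct and follows essentially the same approach as the paper's: set up the recurrence $M_k = n_k M_{k-1} + \prod_{i=1}^{k}(n_i-1) + \binom{n_k-1}{2}$ from Definition~\ref{recur-defi} and then unroll it. You are in fact more careful than the paper, both in justifying that the three contributions are disjoint and in stating the base case (the paper writes $M_1=n_1$, a slip for $\binom{n_1}{2}$).
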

\begin{proof}
Let $M_h$ be the size of $H_{\sn}^h$. Then, by using the recursive Definition \ref{recur-defi},
we have $M_1=n_1$ and
$$
M_h=n_h M_{h-1}+\prod_{i=1}^h (n_h-1)+{n_h-1\choose 2}.
$$
Note that the first summand corresponds to the number of edges of the $n_h$ copies of $H_{\sn}^{h-1}$, whereas the second and third summands account for the edges  joining such copies, according to the conditions \eqref{adj2'} and \eqref{adj3'}, respectively. Using this, the computation of \eqref{recur-size} is immediate.
\end{proof}

Of course, the 1st, 2nd, and 3rd terms in \eqref{recur-size} correspond also to the adjacency conditions \eqref{adj0}, \eqref{adj1}, and \eqref{adj2}, respectively. For instance, in the graph $H_{2,3,4}$ of Fig.~\ref{H_{2,3,4}}, we get $M=12+18+3=33$.

\subsection{Subgraphs}
The following lemma, whose proof follows easily by seeing  $H_{\sn}^k$ as a graph on alphabet, shows the hierarchical nature of our networks.

\begin{lemma}
\label{lema-subgraphs}
\begin{itemize}
\item[$(i)$]
For each given sequence $\alpha_2\ldots \alpha_k$, with
$\alpha_i\in \Z_{n_i}$, $i=2,\ldots,k$, the vertex set $\{x_1\alpha_1
\alpha_2\ldots \alpha_{k}: x_1\in\Z_{n_1}\}$ induces a subgraph isomorphic to $K_{n_1}$.
\item[$(ii)$]
Let $\n\equiv n_1,\ldots,n_k$. For every $i$, $1\le i \le k-1$, $H_{\sn}^k$ can be decomposed into
$n^i$ vertex-disjoint subgraphs isomorphic to $H_{x_1,\ldots,x_{k-i}}$. Each of
such subgraphs is denoted by $H_{\sn\mbox{\scriptsize
$\vecalpha$}}^{k-i}$, and has vertex labels  $
x_{1}x_{2}\ldots x_{k-i}\vecalpha$, with $\vecalpha\equiv\alpha_{k-i+1},\ldots,
\alpha_k \in \Z_{k-i+1}\times\cdots\times \Z_{k}$ being a fixed sequence.
\item[$(iii)$]
The root vertex of the subgraph
$H_{\sn\mbox{\scriptsize
$\vecalpha$}}^{k-i}$ is $\0\vecalpha $, where $|\0|=k-i$, whereas
its {\em peripheral vertices}  are of the form
$x_{1}^*x_{2}^*\ldots x_{k-i}^*\vecalpha $.
\item[$(iv)$]
By collapsing in $H_{\sn}^k$ each of the  $n_{k-i+1}\cdots n_k$  subgraphs
$H_{\sn\mbox{\scriptsize $\vecalpha$}}^{i}$, with a fixed $\vecalpha\in\Z_{k-i+1}\times\cdots\times\Z_{k}$,
into a single vertex and all multiple edges into one, we obtain a
graph isomorphic to $H_{n_{k-i+1},\ldots,n_k}$.
\medskip
\item[$(v)$]
For every fixed $i$, $1\le i\le k$, and
a given sequence $\vecalpha\in\Z_{i+1}\times\cdots\times \Z_k$, the
 $n_i-1$ vertices labeled $\0 x_i^*\vecalpha$ with
$x_i^* \in \Z_{n_i}^*$ (that is, the root vertices of
$H_{\sn x_i^*\mbox{\scriptsize $\vecalpha$}}^{i-1}$)
induce a complete graph isomorphic to $K_{n_i-1}$.
\end{itemize}
\end{lemma}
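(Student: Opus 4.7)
My plan is to work entirely with the alphabet-based adjacency rules (\ref{adj0})--(\ref{adj2}), since the tail-preserving structure of those rules turns the whole lemma into a bookkeeping exercise once the correct decomposition is set up. In each part I parse a vertex label as $\x\vecalpha$, where $\vecalpha$ is a tail of fixed length, and track which coordinates each adjacency rule alters.

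For (i), rule (\ref{adj0}) with $\x=\alpha_2\ldots\alpha_k$ makes every two distinct vertices $x_1\alpha_2\ldots\alpha_k$ and $y_1\alpha_2\ldots\alpha_k$ adjacent, so they induce $K_{n_1}$; rules (\ref{adj1}) and (\ref{adj2}) contribute no extra edges within this set because they preserve the last $k-1$ symbols. For (ii) and (iii), I would verify that the forgetful map $x_1\ldots x_{k-i}\vecalpha\mapsto x_1\ldots x_{k-i}$ is a graph isomorphism from the subgraph of $H_{\sn}^k$ induced by the $\vecalpha$-fixed vertex set onto $H_{n_1,\ldots,n_{k-i}}$. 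The key observation is that any edge produced by (\ref{adj0})--(\ref{adj2}) keeps both endpoints inside the $\vecalpha$-subgraph exactly when every altered coordinate lies in positions $1,\ldots,k-i$, and in that case the rule, read on the first $k-i$ symbols, is formally identical to the corresponding rule of $H_{n_1,\ldots,n_{k-i}}$. Part (iii) then follows because this isomorphism sends $00\ldots 0$ to $\0\vecalpha$ and peripheral vertices to $x_1^*\ldots x_{k-i}^*\vecalpha$.

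Part (iv) is the main technical point: after collapsing each $H_{\sn\vecalpha}^{k-i}$ to a single vertex labeled by $\vecalpha$, the quotient should be isomorphic to $H_{n_{k-i+1},\ldots,n_k}$. By the analysis of (ii), the only edges of $H_{\sn}^k$ crossing between distinct $\vecalpha$-subgraphs are those produced by (\ref{adj1}) or (\ref{adj2}) whose zero-prefix has length exceeding $k-i$, so that the modification reaches into the tail. A case analysis on the location and type of the change in the tail shows that such cross-edges realize precisely the three adjacencies of $H_{n_{k-i+1},\ldots,n_k}$: a full zero-to-nonzero block change anchored at the first tail position gives rule (\ref{adj1}) of the quotient, an interior single-coordinate change inside the tail gives rule (\ref{adj2}), and a single-coordinate change in the first tail position gives rule (\ref{adj0}). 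The subtle point, and the one place where care is required, is that rule (\ref{adj0}) of the quotient is realized by rule (\ref{adj2}) of $H_{\sn}^k$ when both changed values are nonzero but by rule (\ref{adj1}) when one of them is zero; both directions of the correspondence must be checked for completeness.

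Finally, (v) is immediate: any two vertices $\0 x_i^*\vecalpha$ and $\0 y_i^*\vecalpha$ with $x_i^*,y_i^*\in\Z_{n_i}^*$ and $x_i^*\neq y_i^*$ are adjacent by (\ref{adj2}) applied with the common prefix $\0$ of length $i-1$ and the common tail $\vecalpha$, so they induce $K_{n_i-1}$. The expected obstacle is thus the case split in (iv) rather than any deep step.
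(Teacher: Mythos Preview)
Your proposal is correct and follows exactly the approach the paper indicates: the paper gives no detailed proof at all, merely stating that the lemma ``follows easily by seeing $H_{\sn}^k$ as a graph on alphabet,'' and your argument via the adjacency rules (\ref{adj0})--(\ref{adj2}) is a faithful elaboration of that remark. In particular your case analysis for (iv), including the observation that rule (\ref{adj0}) of the quotient is lifted by (\ref{adj2}) when both tail values are nonzero and by (\ref{adj1}) when one of them is zero, is already more careful than anything the paper supplies.
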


\section{Distance parameters}

First, we introduce some notation concerning $H_{\sn}^k$. Let
$\dist_k(\x,\y)$ denote the distance between vertices $\x,\y\in
V_{\sn}^k$ in $H_{\sn}^k$; and $\dist_k(\x,U):=\min_{\u\in
U}\{\dist_k(\x,\u)\}$. Let $\r^{\alpha}=00\ldots0$ be the root
vertex of $H_{\sn\alpha}^{k-1}$, $\alpha\in \Z_n$ (as stated before,
$\r$ stands for the root vertex of $H_{\sn}^k$). Let $P$ and
$P^\alpha$ for $\alpha\in \Z_{n_k}$, denote the set of peripheral vertices
of $H_{\sn}^k$ and $H_{\sn\alpha}^{k-1}$, respectively.

\subsection{The distance function}
In \cite{bcdf14} Barri\`ere, Comellas and the authors proved that the radius $R_k$ of $H_{n,k}$, the
eccentricity $\ecc_k(\r)$ of its root $\r$, and its diameter $D_k$ are
$R_k=\ecc(\r)=k$, and $D_k=2k-1$.
Here we will be more precise and will give both a formula for the distance between vertices, and a proof that the routing algorithm of the next section always gives the shortest path.

Given a $k$-sequence $\x=x_1\ldots x_k\in V_{\sn}^k$, consider the `expanded' $(k+1)$-sequence $\x^+=\x0$ (that is, with $x_{k+1}=0$).
Then, the {\em alternating number} of $\x$, denoted by $\alt(\x)$, is the number of changes in $\x^+$ from a zero element, say $x_i$, to a nonzero element $x_{i+1}^*$ (or vice versa). For instance, if $\x=\x_1^* \0_2 \x_3^* \0_4 \x_5^*$, then $\alt(\x)=5$ (here the $\0_i$'s denote zero strings of some length), and if $\x=\x_1^* \0_2 \x_3^* \0_4$, then $\alt(\x)=3$.
Moreover, given two $k$-sequences $\x$, $\y$,  we denote its maximum common suffix as $\x\cap\y$,
with length $\ell=|\x\cap\y|$, for $0\le \ell\le k$.

By looking at the structure of adjacent vertices given by the adjacency conditions \eqref{adj0}-\eqref{adj2}, the following result is clear.

\begin{lemma}
Let $\x=x_1\ldots\u$ and $\y=y_1\ldots\u$ be two adjacent vertices of  $H_{\sn}^k$ (with $\u$ possibly being the empty string). Then,
\begin{itemize}
\item
If condition \eqref{adj0} holds, then, either $\alt(\x)=\alt(\y)\pm 1$ if $x_1=0$ or $y_1=0$,  or $\alt(\x)=\alt(\y)$ otherwise ($x_1,y_1\neq 0$).
\item
If condition \eqref{adj1} holds, then, $\alt(\x)=\alt(\y)\pm 1$.
\item
If condition \eqref{adj2} holds, then $\alt(\x)=\alt(\y)$.
\end{itemize}
\end{lemma}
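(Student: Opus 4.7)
The plan is to exploit the fact that $\alt(\x)$ depends only on the zero/nonzero pattern of the extended string $\x^+=\x 0$, i.e.\ on which coordinates vanish and which do not, and not on the actual nonzero values. Under this lens $\alt(\x)-\alt(\y)$ is the signed change, as one passes from $\x^+$ to $\y^+$, in the number of adjacent positions whose zero/nonzero status disagrees; and since $\x$ and $\y$ share the long suffix $\u$, any such change is confined to a very short window surrounding the block where $\x$ and $\y$ disagree.

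I would then treat the three adjacency conditions as three short local computations on that window. In case \eqref{adj0} only the first coordinate changes, so the only boundary that can be affected is between positions $1$ and $2$ of $\x^+$; if both $x_1,y_1$ are nonzero this boundary looks the same in $\x^+$ and $\y^+$ and $\alt$ is preserved, whereas if exactly one of them is $0$ (recall $x_1\neq y_1$) its status flips and $\alt$ shifts by $\pm 1$. In case \eqref{adj1} an all-zero block of length $\ell$ becomes an all-nonzero block of the same length, so the interior of the block contributes no boundaries in either configuration and the only affected junction lies between position $\ell$ and position $\ell+1$, whose zero/nonzero status flips, giving $\alt(\x)=\alt(\y)\pm 1$. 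In case \eqref{adj2} a nonzero value is merely replaced by another nonzero value in a fixed coordinate, so the full zero/nonzero pattern is unchanged and $\alt(\x)=\alt(\y)$.

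I do not anticipate any serious obstacle: the statement really is a direct case analysis once one notices the invariance of $\alt$ under permutations of nonzero labels. The only bookkeeping care is with the boundary cases, such as empty $\u$ or an empty prefix of zeros, which I would absorb uniformly by always working with the extended string $\x^+$ in place of $\x$, so that there is always a trailing coordinate available to pair with when the suffix degenerates.
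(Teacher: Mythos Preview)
Your case analysis is correct and is exactly the direct verification the paper has in mind: the paper offers no proof at all, merely stating that the lemma ``is clear'' by inspection of the adjacency rules \eqref{adj0}--\eqref{adj2}. Your observation that $\alt$ depends only on the zero/nonzero pattern of $\x^+$, and that each adjacency type affects at most one boundary of that pattern, is precisely the content the paper leaves implicit.
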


\begin{corollary}
\label{coro1}
Let $\x$ be a vertex of  $H_{\sn}^k$ with $\alt(\x)\neq 0$. Then, there exists a vertex $\y \sim \x$ such that $\alt(\y)=\alt(\x)-1$, but no vertex $\y' \sim \x$ satisfies $\alt(\y')<\alt(\x)-1$.
\end{corollary}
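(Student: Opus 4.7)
The bound $\alt(\y') \geq \alt(\x) - 1$ for every neighbor $\y'$ of $\x$ will follow immediately from the preceding lemma, since each of the adjacency rules \eqref{adj0}--\eqref{adj2} changes $\alt$ by at most one. This half of the statement requires no further work.

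For the other direction, my plan is to produce a neighbor $\y$ with $\alt(\y) = \alt(\x) - 1$ explicitly, by applying rule \eqref{adj1} to the \emph{maximal} leading block of $\x$. Since $\alt(\x) \neq 0$ the vertex $\x$ is not $\0$, and I would split on whether $x_1$ is zero. In the case $x_1 \neq 0$, I let $\ell$ be the length of the maximal leading nonzero block and write $\x = x_1^* \cdots x_\ell^* \, \x'$, where $\x'$ is empty or begins with a zero; rule \eqref{adj1} with $\y^* := x_1^* \cdots x_\ell^*$ then delivers the candidate $\y := \0 \, \x'$ with $|\0| = \ell$. In the case $x_1 = 0$, I let $a$ be the length of the maximal leading zero block (necessarily $a < k$, because $\alt(\x) \neq 0$) and write $\x = \0 \, \x'$ with $\x'$ beginning in a nonzero entry; rule \eqref{adj1} then gives $\y := \y^* \, \x'$ for any nonzero $\y^*$ of length $a$.

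What will remain, in each case, is a transition count comparing $\x^+$ and $\y^+$. These two expansions coincide from position $\ell+1$ (respectively $a+1$) onwards, and on the first $\ell$ (respectively $a$) positions each is a homogeneous string; by maximality of the leading block of $\x$, the symbol of $\x^+$ just past the boundary is of the opposite type from the leading block of $\x$, whereas in $\y^+$ it matches the new leading block of $\y$. Hence exactly the transition at position $\ell$ (respectively $a$) is destroyed and no new transition is created, giving $\alt(\y) = \alt(\x) - 1$.

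The main pitfall, and the point I would spend an example on, is the insistence on the \emph{maximal} leading block. Flipping a shorter prefix can actually \emph{increase} $\alt$: in $H_{2,3,4}$, for instance, $\x = 110$ has $\alt(\x) = 1$, and flipping the maximal nonzero prefix $11$ yields $000$ with $\alt = 0$, while flipping only $x_1$ produces $010$ with $\alt = 2$. Once this distinction is respected, the remaining verification is a routine bookkeeping of block boundaries, and no other obstacles should arise.
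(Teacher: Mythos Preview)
Your argument is correct and essentially fills in what the paper leaves implicit: the paper states Corollary~\ref{coro1} without proof, treating both halves as immediate consequences of the preceding lemma. Your explicit construction via rule~\eqref{adj1} applied to the maximal leading uniform block, together with the block-boundary transition count, is exactly the natural way to realise the existence half, and your treatment of the upper bound matches the paper's intent.
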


\begin{corollary}
\label{coro2}
Let $\x=x_1\ldots x_k$ be a vertex of $H_{\sn}^k$, with root $\r=\0$.
 Then,
\begin{equation}
\label{dist(x,r)}
\dist(\x,\r)=\alt(\x,\r).
\end{equation}
\end{corollary}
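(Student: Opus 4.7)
The plan is to deduce both inequalities $\dist(\x,\r)\le\alt(\x)$ and $\dist(\x,\r)\ge\alt(\x)$ from the lemma preceding Corollary~\ref{coro1} together with Corollary~\ref{coro1} itself. As a preliminary I would note that $\alt(\x)=0$ if and only if $\x=\0=\r$: if some coordinate of $\x$ is nonzero then, because $\x^+$ ends in an additional $0$, at least one transition between a zero and a nonzero block must occur and so $\alt(\x)\ge 1$; conversely $\alt(\0)=0$ is immediate. This singles out $\r$ as the unique vertex annihilated by $\alt$.

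For the lower bound I would invoke the lemma above Corollary~\ref{coro1}: irrespective of which of the three adjacency conditions \eqref{adj0}--\eqref{adj2} produces an edge, the alternating numbers of its endpoints differ by at most one. Hence, along any path $\x=\z_0\sim\z_1\sim\cdots\sim\z_m=\r$, a telescoping estimate gives
$$
\alt(\x)=\alt(\x)-\alt(\r)=\sum_{i=1}^m \bigl(\alt(\z_{i-1})-\alt(\z_i)\bigr)\le m,
$$
and minimizing over all such paths yields $\dist(\x,\r)\ge\alt(\x)$.

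For the upper bound I would iterate Corollary~\ref{coro1}. Setting $\z_0:=\x$, as long as $\alt(\z_i)>0$ there is a neighbor $\z_{i+1}$ with $\alt(\z_{i+1})=\alt(\z_i)-1$. After exactly $\alt(\x)$ steps the walk reaches a vertex whose alternating number vanishes, which by the preliminary observation must be $\r$. This exhibits an explicit walk---in fact a shortest path---from $\x$ to $\r$ of length $\alt(\x)$.

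The argument is essentially bookkeeping on top of the two preceding results; no delicate combinatorics is needed. The one subtle point, and the only place where one could slip, is the preliminary identification $\alt^{-1}(0)=\{\r\}$, which is what converts the iterative descent in $\alt$ guaranteed by Corollary~\ref{coro1} into a walk terminating specifically at the root rather than at some other candidate vertex with zero alternating number.
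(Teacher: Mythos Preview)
Your proof is correct and rests on the same ingredients as the paper's---the lemma bounding $|\alt(\x)-\alt(\y)|$ across an edge, together with Corollary~\ref{coro1}---so the approach is essentially the same. If anything your version is cleaner: you separate the lower and upper bounds explicitly and avoid the paper's case split on whether $x_k=0$, replacing it by the observation that $\alt^{-1}(0)=\{\r\}$, which makes the descent argument terminate at $\r$ automatically.
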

\begin{proof}
By Corollary \ref{coro1}, it is clear that, if $x_k=0$ there is a shortest path from $\x$ to $\r=\0$ of length  $\alt(\x)$. Otherwise, if $x_k=x_k^*\neq 0$, there is a shortest path from $\x$ to $\x^*=x_1^*\ldots x_k^*$ of length $\alt(\x)-1$, but $\x^*$ is adjacent to $\0$.
\end{proof}

As an example, note that in the binomial tree $B_k=H_{2^k}$, the vertices at distance $i$ from the root $\r=\0$ are the binary sequences $\x$ satisfying
$\alt(\x)=i$. Thus, its number is the binomial coefficient ${k\choose i}$,
as was commented before.

The following result gives the distance between two generic vertices $\x$, $\y$ of $H_{\sn}^k$. To avoid the trivial case of the complete graph, we will suppose that $k\ge 2$. Moreover, without loss of generality, we can assume that
$\x$ and $\y$ have no common suffix (so that $x_k\neq
y_k$). Otherwise, if $\x=x_{1}x_{2}\ldots x_{k-i}\vecalpha $ and
$\y=y_{1}y_{2}\ldots y_{k-i}\vecalpha $, that is, $\vecalpha=\x\cap
\y$ and $i=|\vecalpha|>0$, we are in the subgraph
$H_{\sn\mbox{\scriptsize $\vecalpha$}}^{k-i}$ (by Lemma \ref{lema-subgraphs}$(iii)$) and, hence, we can apply the
routing algorithm to the vertices $\x'=x_{1}x_{2}\ldots x_{k-i}$ and
$\y'=y_{1}y_{2}\ldots y_{k-i}$ of $H_{\sn}^{k-i}$.

\begin{proposition}
\label{propo-dist}
Let $\x=x_1\ldots \u$ and $\y=y_1\ldots \v$ be two vertices of $H_{\sn}^k$, with no common suffix, and where all elements of $\u$  are either zero or nonzero, and all elements of $\v$ are either nonzero or zero, respectively. Let $\x'$ be the sequence obtained from $\x$ by deleting its rightmost element $x_k$, and similarly for $\y'=\y\setminus y_k$.
Then,
\begin{itemize}
\item[$(i)$]
If either $x_k=0$ or $y_k=0$, then
$$
\dist(\x,\y)= \alt(\x)+\alt(\y).
$$
\item[$(ii)$]
If $x_k,y_k\neq 0$, and  $|\u|,|\v|>1$, then
$$
\dist(\x,\y)= \alt(\x)+\alt(\y).
$$
\item[$(iii)$]
If $x_k,y_k\neq 0$, and either $|\u|=1$ or  $|\v|=1$, then
$$
\dist(\x,\y)= \alt(\x')+\alt(\y')+1.
$$
\end{itemize}
\end{proposition}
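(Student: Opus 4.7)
The plan is to compare two natural $\x$-to-$\y$ routes in $H_{\sn}^k$ and argue that one of them is optimal. The key structural observation is that at the top level $H_{\sn}^k$ decomposes into $n_k$ copies $H_{\sn\alpha}^{k-1}$ indexed by $\alpha\in\Z_{n_k}$, and the only edges joining distinct top copies are (a) those joining $\r$ to a fully-nonzero peripheral vertex (rule~\eqref{adj1} with empty tail), and (b) the root-to-root shortcut edges $\0 x_k^*\sim\0 y_k^*$ (rule~\eqref{adj2} with empty tail). Consequently copy $0$ is linked to every other copy only through $\r$, so any path that enters copy $0$ must visit $\r$. Since $\x,\y$ share no suffix, $x_k\neq y_k$ and they lie in distinct top copies.

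I first establish the upper bound by exhibiting two explicit paths. The through-root route $\x\leadsto\r\leadsto\y$ has length $\alt(\x)+\alt(\y)$ by Corollary~\ref{coro2}, and is always available. When $x_k,y_k\neq 0$, the shortcut route $\x\leadsto\0 x_k^*\sim\0 y_k^*\leadsto\y$ is also available: its two in-copy segments, interpreted in the isomorphic copies of $H_{\sn}^{k-1}$, have lengths $\alt(\x')$ and $\alt(\y')$ by Corollary~\ref{coro2} applied to $H_{\sn}^{k-1}$, so its total length is $\alt(\x')+\alt(\y')+1$.

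For the matching lower bound, I split every $\x$-to-$\y$ path into two classes depending on whether it visits $\r$. A path visiting $\r$ has length $\ge\dist(\x,\r)+\dist(\r,\y)=\alt(\x)+\alt(\y)$. A path avoiding $\r$ also avoids copy $0$ entirely and crosses between nonzero copies only along shortcut edges, which attach only at copy roots; therefore it leaves the starting copy at $\0 x_k^*$ (contributing $\ge\alt(\x')$ steps), enters the ending copy at $\0 y_k^*$ (contributing $\ge\alt(\y')$ steps), uses at least one shortcut edge, and accrues only nonnegative cost inside each intermediate copy (entered and exited at the same root), giving length $\ge\alt(\x')+\alt(\y')+1$. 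Combining,
\[
\dist(\x,\y)=\min\bigl(\alt(\x)+\alt(\y),\,\alt(\x')+\alt(\y')+1\bigr),
\]
with the shortcut branch unavailable in case~(i).

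It then remains to evaluate this minimum by comparing $\alt(\x)$ with $\alt(\x')$ via a block-boundary count of $\x^+=\x 0$. If $|\u|\ge 2$ and $x_k\neq 0$, the trailing all-nonzero block of length $\ge 2$ preserves the boundaries of $\x^+$ relative to $\x'^+$, giving $\alt(\x)=\alt(\x')$; if $|\u|=1$ and $x_k\neq 0$, then $x_{k-1}=0$ and $\x^+$ carries two extra boundaries ($0\to x_k^*$ and $x_k^*\to 0$), giving $\alt(\x)=\alt(\x')+2$; symmetric identities hold for $\y$. Substituting these identities, the minimum above collapses to $\alt(\x)+\alt(\y)$ in cases~(i) and~(ii), and to $\alt(\x')+\alt(\y')+1$ in case~(iii). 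The subtlest point I anticipate is the lower bound for $\r$-avoiding paths, specifically the claim that every departure from the starting copy must occur precisely at $\0 x_k^*$; this relies on the fact that both endpoints of a top-level shortcut edge in rule~\eqref{adj2} are of the form $\0 z^*$, so no shortcut edge attaches anywhere other than a copy's root.
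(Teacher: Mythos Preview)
Your argument is correct and follows the same underlying idea as the paper's proof---route either through the global root $\r$ or through the pair of copy roots $\0 x_k^*,\0 y_k^*$---but you supply the rigor the paper omits: you justify the lower bounds by classifying all inter-copy edges, and you actually compare the two route lengths via the identities $\alt(\x)=\alt(\x')$ (when $|\u|\ge 2$) and $\alt(\x)=\alt(\x')+2$ (when $|\u|=1$), whereas the paper simply asserts in each case which route the shortest path ``must'' take.
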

\begin{proof}
First, notice that, as $|\x\cap\y|=0$, we always have $x_k\neq y_k$.
Then the key idea is that we cannot reach $\y$ from $\x$ without going through vertices with most of their elements being null (roots).
Thus, using Corollary \ref{coro2}, we have:

$(i)$-$(ii)$ In these cases, the shortest path must go through the root $\r=\0$ and, hence,
$$
\dist(\x,\y)=\dist(\x,\r)+\dist(\r,\y)=\alt(\x)+\alt(\y).
$$

$(iii)$ Now, the shortest path must go through the roots $\r'=\0x_k^*$ and $\r''=\0y_k^*$ of the subgraphs isomorphic to $H_{x_1\ldots,x_{k-1}}$
and $H_{y_1\ldots,y_{k-1}}$, respectively. Then
$$
\dist(\x,\y)=\dist(\x,\r')+\dist(\r',\r'')+\dist(\r'',\y) =\alt(\x')+1+\alt(\y'),
$$
since $\r'$ and $\r''$ are adjacent by \eqref{adj2}.
This completes the proof.
\end{proof}

\subsection{Eccentricity, radius and diameter}

As a consequence of the above results, we have the following lemma:

\begin{lemma}
Let $\x$ be a vertex of $H_{\sn}^k$, as in Proposition \ref{propo-dist}. Then, its
eccentricity $\ecc_k(\x)$,
the radius $R_k$ of $H_{\sn}^k$, and its diameter $D_k$ are:
\begin{enumerate}
\item[$(i)$] For the eccentricity,  we must distinguish two cases:
\begin{itemize}
\item[$(i.1)$] If $\n=2,2,\ldots,2$ (that is, $H_{\sn}^k$ is the binomial tree $B_k$), let $\overline{\x}$ denote the sequence obtained from $\x$ by interchanging $0$'s and $1$'s. Then,
$$
\ecc_k(\x)=
\left\{
\begin{array}{ll}
\alt(\x)+k, & \mbox{if $x_k=0$,}\\
\alt(\overline{\x})+ k, & \mbox{if $x_k=1$.}
\end{array}
\right.
$$
\item[$(i.2)$]
If $H_{\sn}^k$ is not the binomial tree $B_k$, then
$$
\ecc_k(\x)=
\left\{
\begin{array}{ll}
\alt(\x)+k-1, & \mbox{if $|\u|=1$ and $x_k\neq 0$,}\\
\alt(\x)+k,    & \mbox{otherwise.}
\end{array}
\right.
$$
\end{itemize}
\item[$(ii)$]
The radius of $H_{\sn}^k$ coincides with the eccentricity of its root:
$$
R_k=\ecc(\r)=k.
$$
\item[$(iii)$] The diameter of $H_{\sn}^k$ is the same as that of $H_{n^k}$:
$$
D_k=2k-1.
$$
\end{enumerate}
\end{lemma}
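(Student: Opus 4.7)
The plan is to reduce all three statements to Proposition~\ref{propo-dist} and Corollary~\ref{coro2}, pushing everything through the alternating number. For any vertex $\x$, the eccentricity is $\ecc_k(\x)=\max_\y\dist(\x,\y)$, and the three cases of Proposition~\ref{propo-dist} express this distance either as $\alt(\x)+\alt(\y)$ or as $\alt(\x')+\alt(\y')+1$, so part~(i) becomes a combinatorial optimization over $\y$.

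For part (i.2) I would split on whether $x_k=0$. When $x_k=0$, case~(i) of Proposition~\ref{propo-dist} applies for every $\y$ with $\x\cap\y=\emptyset$, yielding $\dist(\x,\y)=\alt(\x)+\alt(\y)$; since $H_{\sn}^k$ is not the binomial tree, some $n_i\geq 3$ affords enough freedom to build a $\y$ with $\alt(\y)=k$ via a fully alternating sequence ending in a nonzero digit, proving the branch $\ecc_k(\x)=\alt(\x)+k$. When $x_k\neq 0$, I would compare the three candidates for $\y$ coming from cases (i), (ii), and (iii), and observe that the hypothesis $|\u|=1$ forces case~(iii) for every $\y$ with $y_k\neq 0$, capping the maximum at $\alt(\x)+k-1$, whereas $|\u|\geq 2$ keeps case~(ii) available and realises $\alt(\x)+k$. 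Part (i.1) is handled in parallel, the difference being that the binary alphabet eliminates adjacencies of type~(\ref{adj2}), so $B_k$ is a tree and the unique path from $\x$ to its farthest vertex is forced through the root; an explicit check identifies the extremal $\y$ as $\overline{\x}$, and the formula follows using $\alt(\overline{\x})-\alt(\x)=\pm 1$ according to the value of $x_k$.

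Part (ii) follows at once: by Corollary~\ref{coro2}, $\ecc(\r)=\max_\x\alt(\x)=k$, and part~(i) gives $\ecc_k(\x)\geq k$ for every $\x$, because $\alt(\x)+k\geq k$ always and, when $x_k\neq 0$, the final pair $(x_k,0)$ of $\x^+$ contributes a change, forcing $\alt(\x)\geq 1$ and hence $\alt(\x)+k-1\geq k$. Part (iii) is the dual bound: when $x_k=0$ the last pair $(0,0)$ of $\x^+$ is never a change, so $\alt(\x)\leq k-1$, and when $x_k\neq 0$ the trivial bound $\alt(\x)\leq k$ suffices; plugging into part~(i) yields $\ecc_k(\x)\leq 2k-1$ in every case, and the bound is attained by an alternating $\x$ ending in $0$, for which $\alt(\x)=k-1$ and $\ecc_k(\x)=2k-1$.

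The main obstacle is the bookkeeping in part~(i): one must verify that no combination of cases in Proposition~\ref{propo-dist} ever exceeds the stated maximum, in particular reconciling case~(iii)'s value $\alt(\x')+\alt(\y')+1$ with cases (i)--(ii) via the explicit relation $\alt(\x)-\alt(\x')\in\{0,2\}$, whose two values correspond precisely to the conditions $|\u|\geq 2$ and $|\u|=1$. A secondary subtlety is the binomial tree: the sparser edge set of $B_k$ prevents some of the maximizers used in the general argument from existing, which is exactly why (i.1) needs the substitute vertex $\overline{\x}$ and must be treated separately.
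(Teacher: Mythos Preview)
Your overall strategy---reduce everything to Proposition~\ref{propo-dist} and Corollary~\ref{coro2}, then maximise over $\y$---is exactly what the paper does, and your treatment of parts~(ii) and~(iii) is in fact more carefully argued than the paper's one-line pointers to~(i). There is, however, a genuine gap in your handling of~(i.1).

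You assert that in $B_k$ ``an explicit check identifies the extremal $\y$ as $\overline{\x}$''. This is false. Take $\x=011$ in $B_3$: one computes $\ecc(\x)=4$, realised by $\y=010$, whereas $\overline{\x}=100$ is at distance only~$3$. More simply, for $\x=11$ in $B_2$ one has $\ecc(\x)=2$ but $\dist(11,\overline{11})=\dist(11,00)=1$. So the computation you outline, $\dist(\x,\overline{\x})=\alt(\x)+\alt(\overline{\x})$ together with $\alt(\overline{\x})=\alt(\x)\pm 1$, gives $2\alt(\x)\pm 1$, which does not equal $\alt(\overline{\x})+k$ except when $\alt(\x)=k$. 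The paper avoids this by observing that the map $\x\mapsto\overline{\x}$ is an \emph{automorphism} of $B_k$ (rules~(\ref{adj0}) and~(\ref{adj1}) are visibly preserved, and rule~(\ref{adj2}) is vacuous when every $n_i=2$). Hence $\ecc_k(\x)=\ecc_k(\overline{\x})$, and since $\overline{x}_k=0$ the already-proved first branch gives $\ecc_k(\overline{\x})=\alt(\overline{\x})+k$. This is the missing idea: $\overline{\x}$ is not the far vertex from $\x$, it is the vertex \emph{with the same eccentricity as $\x$} to which the $x_k=0$ case applies.

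A smaller point: in your argument for~(iii), the ``trivial bound $\alt(\x)\le k$'' does not suffice when $x_k\neq 0$ and $|\u|\ge 2$, since the relevant formula is then $\alt(\x)+k$. You need the extra observation that $|\u|\ge 2$ forces $x_{k-1}\neq 0$ as well, so the pair $(x_{k-1},x_k)$ in $\x^+$ contributes no change and $\alt(\x)\le k-1$ after all.
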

\begin{proof}
$(i)$ If $x_k=0$, by Proposition \ref{propo-dist}$(i)$, $\dist(\x,\y)=\alt(\x)+\alt(\y)$, but $\max\{\alt(\y)\}=k$.
This proves que first case in $(i.1)$, and the second one in $(i.2)$ (since the same is true when $|\u|>1$, by Proposition \ref{propo-dist}$(ii)$). The second equality in $(i.1)$ follows from the fact that the mapping $\x \mapsto \overline{\x}$ is an isomorphism on the binomial tree.
Finally, the first equality in $(i.2)$ is proved similarly by using
Proposition \ref{propo-dist}$(iii)$.

$(ii)$ Take $\x=\r$ in $(i)$.

$(iii)$ Take $\x$ with $\alt(x)=k$ in $(i)$.
\end{proof}
\section{A shortest path routing algorithm}

The reasonings that support Proposition \ref{propo-dist}, leads us to a routing algorithm between two generic vertices $\x$, $\y$ of $H_{\sn}^k$, which follows a shortest path.

\subsection{Routing}

Let us consider two vertices in $H_{\sn}^k$, say $\x=x_1x_2\ldots x_k$
and $\y=y_1y_2\ldots y_k$. In Table \ref{pseudocode}, we show a possible
version of the routing algorithm.
As commented  above, the
key idea is to go through, either,  the root $\r=\0$ of $H_{\sn}^k$, or the roots $\r'=\0{k-1}x_k$ and $\r''=\0{k-1}y_k$ of the respective subgraphs.
To describe the algorithm it is useful to introduce some further notation.
Given a sequence $\x=x_1x_2\ldots$, we denote by $\overline{\x}=\overline{x}_1\overline{x}_2\ldots$ the sequence obtained from $\x$ by changing each zero term $x_i$ by an (arbitrary) nonzero term
$\overline{x}_i\in \Z_{n_i}$ and vice versa. Of course, if $n_i=2$, $\overline{x}_i$ is just the  already defined (and univocally determined) conjugate of $x_i$. Moreover, we say that $\x$ is {\em uniform} if all its terms are either zero or nonzero.
If $\x$ has maximum uniform prefix $\u$, we denote by $\swap(\x)$ the sequence obtained from $\x$ by changing $\u$ to $\overline{\u}$.
Similarly, given two sequences of the form $\x=\u_1\u_2\w$, and $\y=\v_1\v_2\w$, where both $\u=\u_1\u_2$ and $\v_2$ are maximal uniform subsequences, one of them being $\0$, and $|\u_2|=|\v_2|$, we denote by $\swap(\x\rightarrow\y$) the sequence $\u_1\v_2\w$. (For a better understanding, see the examples in the next subsection.)

\begin{table}[t]
\begin{center}
{\tt
\begin{tabbing}
{\bf algo}\={\bf rithm:} Routing \\
           \> {\bf input:} \= Sequence $n_1,\ldots,n_k$, \\
           \>              \> Vertices \=$\x=x_1\ldots \u$, $\y=y_1\ldots \v$ (with $\u$ and $\v$ maximal \\
           \>              \>      \>being uniform subsequences)\\
           \> {\bf output:}\> A shortest path between $\x$ and $\y$ \\
           \>{\bf If} $x_k=0$ \= {\bf or} $y_k=0$ {\bf or} $|\u|,|\v|>1$ {\bf then}\\
           \>                 \>{\bf while} \= $\alt(\x)>0$ {\bf do} \\
           \>            \>   \>{\bf go to} $\x:=\swap(\x)$\\
           \>            \>$\r:=\0$\\
           \>            \>{\bf while} $\r\neq \y$ {\bf do}\\
\>           \>          \>{\bf go to} $\r:=\swap(\r \rightarrow \y)$\\
\> {\bf Else} \\
\>$\x':=\x\setminus x_k$, $\y':=\x\setminus y_k$,\\
\>        \>{\bf while} $\alt(\x')>0$ {\bf do}\\
\>        \>           \>{\bf go to} $\x':=\swap(\x')$\\
\>        \>$\x:=\x'\cup x_k$\\
\>        \>$\r:=\0y_k$\\
\>        \> {\bf while} $\r\neq \y$ {\bf do}\\
\>        \>            \>{\bf go to} $\r:=\swap(\r \rightarrow \y)$\\
{\bf end}
\end{tabbing}
}
\caption{A shortest path routing algorithm.}
\label{pseudocode}
\end{center}
\end{table}

\subsection{Examples}
Let us now see three examples of the outputs of our routing algorithm.
Each example corresponds to one of the cases of Proposition \ref{propo-dist}.
According to this result, in each case we have taken
vertices $\x$ and $\y$ with no common suffix, so that $x_k\neq
y_k$.

\begin{itemize}
\item
If $\x=01010$, and $\y=10101$ in $H_{2^5}$, we have $\alt(\x)=4$ and $\alt(\y)=5$. Since $x_5=0$, by Proposition \ref{propo-dist}$(i)$, $\dist(\x,\y)=4+5=9$. Indeed, the shortest path is:
\item[]
$01010\rightarrow 11010 \rightarrow 00010 \rightarrow 11110 \rightarrow 00000 \rightarrow 11111 \rightarrow 00001 \rightarrow 11101 \rightarrow 00101\rightarrow 10101$.

\item
If $\x=0210312$, and $\y=1221023$ in $H_{2,3,3,3,4,4,4}$, we have $\alt(\x)=4$ and $\alt(\y)=3$. Since $|\u|,|\v|>1$, Proposition \ref{propo-dist}$(ii)$ tells us again that $\dist(\x,\y)=4+3=7$. In this case, one of the possible shortest paths is:
\item[]
$0210312\rightarrow 1210312 \rightarrow 0000312 \rightarrow 1111112 \rightarrow 0000000 \rightarrow 1111123 \rightarrow 0000023 \rightarrow 1221023$.

\item Let $\x=102302$, and $\y=101013$ in $H_{2,2,3,4,2,4}$. Since $x_6,y_6\neq 0$ and $|\u|=1$, we must apply Proposition \ref{propo-dist}$(iii)$ with $\alt(\x')=3$ and $\alt(\y')=5$, which gives   $\dist(\x,\y)=3+5+1=9$. In this case, one of the shortest paths is:
\item[]
$102302\rightarrow 002302 \rightarrow 112302 \rightarrow 000002 \rightarrow 000003 \rightarrow 111113 \rightarrow 000013 \rightarrow 111013\rightarrow 001013\rightarrow 101013$.

\end{itemize}

%


\subsection*{Acknowledgment}
This research was supported by the
{\em Ministerio de Ciencia e Innovaci\'on} (Spain) and the {\em European Regional
Development Fund} under project MTM2011-28800-C02-01, and the {\em Catalan Research
Council} under project 2014SGR1147.

\bibliographystyle{plain}

\end{document}